\documentclass[11pt]{amsart}
\usepackage{amsmath,amsfonts,amssymb,latexsym}
%
%
\usepackage[T1]{fontenc}

\setlength{\topmargin}{-15mm}  
\setlength{\textwidth}{160mm}
\setlength{\textheight}{247mm}
\setlength{\oddsidemargin}{0cm}
\setlength{\evensidemargin}{0mm}
\newtheorem{theorem}{\bf \mbox{Theorem}}[section]
\newtheorem{proposition}[theorem]{\bf \mbox{Proposition}}
\newtheorem{lemma}[theorem]{\bf \mbox{Lemma}}
\newtheorem{definition}[theorem]{\bf \mbox{Definition}}
\newtheorem{cor}[theorem]{\bf \mbox{Corollary}}
\newtheorem{remark}{\bf Remark}

\newcommand{\field}[1]{\mathbb{#1}}

\newcommand{\bm}{{\bf m}}
\newcommand{\bn}{{\bf n}}
\newcommand{\bo}{{\bf 0}}
\newcommand{\bp}{{\bf p}}

\newcommand{\bu}{{\bf u}}
\newcommand{\bv}{{\bf v}}
\newcommand{\bx}{{\bf x}}

\newcommand{\Cal}[1]{\mathcal{#1}}

\newcommand{\CC}{\mathbb{C}^2}

\newcommand{\cO}{\Cal{O}}
\newcommand{\dist}{{\rm dist}}

\newcommand{\Q}{\field{Q}}
\newcommand{\R}{\field{R}}

\newcommand{\vp}{{\varphi}}

\def\C{\mathbb{C}}
\def\K{\mathbb{K}}

\begin{document}
\title{On the bi-Lipschitz contact equivalence of plane complex function-germs}
\author{- Lev Birbrair - Alexandre Fernandes - Vincent Grandjean -}
\address{ Departamento de Matem\'atica, UFC,
Av. Humberto Monte s/n, Campus do Pici Bloco 914,
CEP 60.455-760, Fortaleza-CE, Brasil}
\email{birb@ufc.br}
\email{alexandre.fernandes@ufc.br}
\email{vgrandje@fields.utoronto.ca}
\thanks{}
\subjclass[2010]{Primary ,Secondary }

\keywords{}
\date{\today}
\maketitle
%
%
%

\begin{abstract} In this note, we consider the problem of bi-Lipschitz contact equivalence of
complex analytic function-germs of two variables. Basically, it is inquiring about the infinitesimal sizes
of such function-germs up to bi-Lipschitz changes of coordinates. We show that this problem is equivalent
to right topological classification of such function-germs.
\end{abstract}
\section{Contact equivalence}

\medskip
Two $\K$-analytic function-germs $f,g\colon (\K^{n},\bo)\rightarrow(\K,0)$, at the origin $\bo$ of $\K^n$, are
($\K$-analytically) \em contact equivalent \em if the ideals
(in ${\Cal O}_{\K^n,\bo}$) generated by $f$ and, respectively, generated by $g$ are $\K$-analytically
isomorphic. As is well known, this classical ($\K$-analytic) contact equivalence admits moduli.
For a complete description and answer to Zariski \em probl\`eme des modules pour les branches planes \em in the uni-branch case,
see \cite{HH},  (see also \cite{HHH} for an answer towards the general case).
Over the years several generalizations of the notion of ($\K$-analytic) contact equivalence appeared,
and for some rough ones moduli do not appear.

\medskip
%
More precisely, we will say that two function-germs $f,g\colon (\K^{n},\bo)\rightarrow(\K,0)$ 
at the origin $\bo$ of $\K^n$ are \emph{bi-Lipschitz contact equivalent} if there exists 
$H\colon(\K^{n},\bo)\rightarrow(\K^{n},\bo)$ a bi-Lipschitz homeomorphism  and there exist 
positive constants $A$ and $B$, and $\sigma \in\{-1,+1\}$ such that
\begin{center}
\vspace{4pt}
$A|f(\bp)| \leq |g\circ H(\bp)| \leq  B|f(\bp)|$  when $\K = \C$,
\vspace{6pt}
\\
$A f(\bp)  \leq  \sigma \cdot (g\circ H(\bp))  \leq  B f(\bp)$  when $\K = \R$,
\vspace{4pt}
\end{center}
for any point $\bp\in \K^n$ close to $\bo$.

When the bi-Lipschitz homeomorphism $H$ is also subanalytic, we will say that the functions $f$ and $g$ are
\emph{subanalytically bi-Lipschitz contact equivalent}.

\medskip
A consequence of the main result of \cite{BCFR} on bi-Lipschitz contact equivalence of
Lipschitz function-germs is the following finiteness

\smallskip\noindent
{\bf Theorem} (\cite{BCFR}). \em For any given pair $n$ and $k$ of positive integers,
the subspace of polynomial function-germs $(\K^{n},\bo)\rightarrow(\K,0)$ of degree smaller than or
equal to $k$ has finitely many bi-Lipschitz contact equivalence classes. \em

\medskip
Later on, Ruas and Valette (see \cite{RV}) obtained for real mappings a result more general than that of \cite{BCFR}, and which
again ensures the finiteness of the bi-Lipschitz contact equivalent classes for polynomial functions germ $(\K^{n},\bo)
\rightarrow(\K,0)$ with given bounded degree.
However, we observe that in the aforementioned papers \cite{BCFR,RV}, the proofs of the finiteness theorems
for bi-Lipschitz contact equivalence do not say anything about the corresponding recognition problem.

\medskip
The preprint \cite{BFGG} completely solves the recognition problem of subanalytic contact bi-Lipschitz equivalence
for continuous subanalytic function-germs $(\R^{2},\bo)\rightarrow(\R,0)$ by providing an explicit
combinatorial object which completely characterizes the corresponding orbit.

\medskip
In the present note, we solve the recognition problem for the subanalytic bi-Lipschitz contact equivalence of complex
analytic function-germs $(\C^{2},\bo)\rightarrow(\C,0)$.

\smallskip
Our main result, Theorem \ref{thm:main}, states that the subanalytic bi-Lipschitz contact equivalence 
class of a plane complex analytic function-germ $f\colon(\C^{2},\bo)\rightarrow(\C,0)$ determines and 
is determined by purely numerical data, namely: \em the Puiseux pairs of each branch of its zero locus, 
the multiplicities of its irreducible factors and the intersection numbers of pairs of branches of its zero locus. \em
It is a consequence of Theorem \ref{thm:order} which explicits the order of an irreducible function-germ $g$ 
along real analytic half-branches at $\bo$ as an affine function of the contact of the half-branch and the zero 
locus of $g$ the function-germ.  

\smallskip
Last, combining the main result of \cite{Pa} and our main result, we eventually get that two complex analytic function
germs $f,g\colon(\C^{2},\bo)\rightarrow(\C,0)$ are subanalytically bi-Lipschitz contact equivalent if, and only if, they are
\em right topologically equivalent, \em i.e. there exists a homeomorphism $\Phi\colon(\CC,\bo)\rightarrow(\CC,\bo)$
such that $f=g\circ \Phi$.
\section{Preliminaries}
We present below some well known material about complex analytic plane curve-germs.
It will be used in the description and the proof of our main result.
\subsection{Embedded topology of complex plane curves}\label{subsection:puiseux}
$ $

\smallskip
Let $f\colon(\C^{2},\bo)\rightarrow(\C,0)$ be the germ at $\bo$ of an irreducible analytic function.
It admits a Puiseux parameterization of the following kind:
\begin{equation}\label{eq:Puiseux-param}
x\to (x^m, \Psi (x)) \; \mbox{\rm with } \Psi (x) = x^{\beta_1}\vp_1(x^{e_1}) +
\ldots +  x^{\beta_s}\vp_s (x^{e_s}),
\end{equation}
where each function $\vp_i$ is a holomorphic unit at $x=0$, the integer number $m$ is the multiplicity of the function $f$ 
at the origin and $(\beta_1,e_1),\ldots,(\beta_s,e_s)$ are the Puiseux pairs of $f$. Then we can write down,
\begin{equation}\label{eq:fact-irred}
f (x^m,y) = U(x,y)  \Pi_{i=1}^m (y - \Psi(\omega^i x)),
\end{equation}
where $\omega$ is a primitive $m$-th root of unity, the function $U$ is a holomorphic unit at the origin,
and $\Psi$ is a function like in Equation (\ref{eq:Puiseux-param}).

\medskip
The following relations determines the Puiseux pairs of $f$. Let us write $\Psi (x) = \sum_{j>m} a_jx^j$ 
and $e_0 := m$ and $\beta_{s+1} := + \infty$. We recall that
\begin{center}
\vspace{4pt}
$\beta_{i+1} = \min\{j: a_j \neq 0 \mbox{ and } e_i \not | j\}$
and $e_{i+1} := \gcd (e_i,\beta_{i+1})$
\vspace{4pt}
\end{center}
for $i=0,\ldots,s-1$. We deduce there exists positive integers $m_1,\ldots, m_s,$ such that
for each $k=1,\ldots,s$, we find 
\begin{equation}\label{eq:multiplicity}
m = e_1 m_1 = e_2 m_2 m_1 = \ldots = e_k (m_k \cdots m_1) 
\end{equation}
We recall that the irreducibility of the function $f$ implies that $e_s=1$.
\begin{remark}
Let $f\colon(\CC,\bo)\rightarrow(\C,0)$ be an irreducible analytic function-germ and let $X$ be its zero locus.
The ideal $I_X$ of $\C\{x,y\}$ consisting of all
the functions vanishing on $X$ is generated by $f$. If $g = \lambda f$ is any other generator of $I_X$, then the functions
$f$ and $g$ have the same Puiseux pairs. Thus we will speak of the \emph{Puiseux pairs of the branch} $X$.
\end{remark}
Let $f_1,f_2\colon(\CC,\bo)\rightarrow(\C,0)$ be irreducible analytic function-germs, and let $X_1$ and $X_2$ be
the respective zero sets of $f_1$ and $f_2$.

The \emph{intersection number at $\bo$} of the branches $X_1$ and $X_2$ is defined as:
$$(X_1,X_2)_{\bo}=\dim_{\C}\frac{\C\{x,y\}}{(f_1,f_2)}$$
where $(f_1,f_2)$ denotes the ideal generated by $f_1$ and $f_2$.

\medskip\noindent
{\bf Notation:} Let $\Phi: (\CC,\bo) \to (\CC,\bo)$ be a homeomorphism and let $X$ be a subset germ of
$(\CC,\bo)$. We will write
$$\Phi: (\CC,X,\bo) \to (\CC,Y,\bo)$$
to mean that the subset germ $Y$ is the germ of the image $\Phi (X)$ of $X$.

\medskip
The following classical result completely described the classification of embedded complex plane curve germs:
\begin{theorem}[\cite{Bu,Za}]\label{theor-classic}
Let $f,g\colon(\CC,\bo)\rightarrow(\C,0)$ be reduced analytic function-germs and let $X$ and $Y$ be the respective zero
sets of $f$ and $g$. Let $X=\bigcup_{i=1}^rX_i$ and $Y=\bigcup_{i=1}^sY_i$ be the irreducible components of $X$ and $Y$
respectively. There exists a  homeomorphism $\Phi\colon(\CC,X,\bo)\rightarrow(\CC,Y,\bo)$ if and only if,
up to a re-indexation of the branches of $Y$, the components $X_i$ and $Y_i$ have the same Puiseux pairs, and each pair of
branches $X_i$ and $X_j$ have the same intersection numbers as the pair $Y_i$ and $Y_j$.
\end{theorem}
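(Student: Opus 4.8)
The plan is to pass from the embedded germs to their links and then to classify the links by the combinatorics of an embedded resolution. First I would invoke Milnor's local conical structure theorem: for all sufficiently small $\epsilon>0$ the pair $(\C^2,X)$ is homeomorphic, as a germ at $\bo$, to the cone over the \emph{link pair} $(S^3_\epsilon,K_X)$, where $S^3_\epsilon$ is the sphere of radius $\epsilon$ centered at $\bo$ and $K_X:=X\cap S^3_\epsilon$, and similarly for $Y$. Consequently the existence of an embedded homeomorphism $\Phi\colon(\CC,X,\bo)\to(\CC,Y,\bo)$ is equivalent to the existence of a homeomorphism of link pairs $(S^3,K_X)\to(S^3,K_Y)$. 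Since the branches $X_i$ are distinct, their links $K_i:=X_i\cap S^3_\epsilon$ are pairwise disjoint, so $K_X=\bigsqcup_{i=1}^r K_i$; a homeomorphism of pairs carries connected components to connected components, forcing $r=s$ and producing a permutation $\tau$ matching $K_i$ to the link of $Y_{\tau(i)}$. This $\tau$ will be the re-indexation in the statement.

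Next I would identify the two pieces of numerical data as invariants of the link. For a single irreducible branch $X_i$, the Puiseux parametrization (\ref{eq:Puiseux-param}) realizes $K_i$ as an iterated torus knot whose successive cabling (Newton) parameters are computed from the Puiseux pairs $(\beta_1,e_1),\dots,(\beta_s,e_s)$ through the multiplicity relations (\ref{eq:multiplicity}); conversely the isotopy type of an iterated torus knot recovers exactly this data. This is the Brauner--Burau--Zariski description of the topology of a plane branch, giving that two branches have isotopic links precisely when they share the same Puiseux pairs. For two distinct branches I would use the classical identity $\mathrm{lk}(K_i,K_j)=(X_i,X_j)_{\bo}$, expressing the intersection multiplicity as a linking number; this is a topological invariant of the oriented link, and it becomes an unsigned invariant once one notes that algebraic linking numbers are positive, which also disposes of the orientation ambiguity in a general homeomorphism.

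The remaining, and genuinely substantial, step is to show that the topological type of the \emph{whole} multibranch link is determined by, and determines, the per-component knot types together with the pairwise linking numbers. Here I would pass to an embedded resolution of $(X,\bo)$ by a finite sequence of point blow-ups and encode it in the decorated dual resolution graph (equivalently, the splice diagram of Eisenbud--Neumann). On one side this graph is computed from, and computes, exactly the Puiseux pairs of each branch, which govern the chains of infinitely near points along that branch, together with the intersection numbers $(X_i,X_j)_{\bo}$, which govern the level at which distinct branches separate in the resolution. On the other side, $K_X$ is the boundary link of the plumbing prescribed by this negative-definite graph, and by Neumann's plumbing calculus together with Waldhausen's rigidity theorem for graph manifolds the minimal such graph is a complete homeomorphism invariant of the link pair. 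Assembling the three steps then yields both implications: a germ homeomorphism gives a link homeomorphism, hence equal minimal graphs, hence (after $\tau$) equal Puiseux pairs and equal intersection numbers; conversely equality of these data produces identical graphs, hence a homeomorphism of the plumbed links, hence by coning the desired $\Phi$.

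I expect the main obstacle to lie entirely in this last paragraph: the rigidity statement that turns equality of the numerical data into an actual homeomorphism of links, and conversely, rests on the uniqueness of the minimal plumbing graph of a graph manifold rather than on any explicit geometric construction, and it is where the force of the Burau--Zariski classification genuinely resides. Steps one and two, by contrast, are classical and essentially direct once the link is in hand.
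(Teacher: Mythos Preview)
The paper does not prove this theorem. It is stated as a classical result, attributed to Burau and Zariski via the citations \cite{Bu,Za}, and used as background input; no argument is given or even sketched. So there is nothing in the paper to compare your proposal against.

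That said, your outline is a coherent modern route to the result: reduce to links via the local conical structure, identify each branch link as an iterated torus knot encoded by its Puiseux data, identify pairwise intersection numbers with linking numbers, and then invoke the Waldhausen/Neumann rigidity of graph manifolds (equivalently, the Eisenbud--Neumann splice formalism) to conclude that these data determine the ambient link type. This is correct in spirit, though it is worth noting that it is anachronistic relative to the cited sources: Burau (1932) and Zariski (1965) argued directly with iterated torus links and characteristic pairs, without Waldhausen's 1967 theorem or plumbing calculus. If you want a proof faithful to the citations you would instead classify iterated torus links by hand via successive solid-torus decompositions; your approach is cleaner but relies on heavier later machinery. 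One small caution: the step ``equal Puiseux pairs and equal pairwise intersection numbers $\Rightarrow$ identical minimal resolution graphs'' deserves a sentence of justification (it is true, via the usual translation between characteristic exponents, separation exponents, and the combinatorics of infinitely near points), since that is where the multibranch information is actually assembled.
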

We end-up this subsection in recalling a recent result of Parusi\'nski \cite{Pa}. It is as much a generalization of
Theorem \ref{theor-classic} to the non reduced case, as it is an improvement in the sense that it provides a
more rigid statement.
\begin{theorem}\label{theor-parusinski} Let $f,g\colon(\C^{2},\bo)\rightarrow(\C,0)$ be complex analytic
function-germs (thus not necessarily reduced). There exists a germ of homeomorphism $\Phi: (\CC,\bo)\to(\CC,\bo)$ such
that $g\circ\Phi = f$ (the function-germs $f$ and $g$ are then said \em topologically right-equivalent\em) if, and only if,
there exists a one-to-one correspondence between the irreducible factors of $f$ and $g$ which preserves
the multiplicities of these factors, their Puiseux pairs and the intersection numbers of any pairs of distinct
irreducible components of the respective zero loci of $f$ and $g$.
\end{theorem}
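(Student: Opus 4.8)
The plan is to prove both implications, using Theorem \ref{theor-classic} as the reduced backbone onto which the multiplicity bookkeeping is grafted. For necessity, suppose $\Phi\colon(\CC,\bo)\to(\CC,\bo)$ is a germ of homeomorphism with $g\circ\Phi=f$. Writing $X=\{f=0\}$ and $Y=\{g=0\}$, the identity $g\circ\Phi=f$ forces $\Phi(X)=Y$, so $\Phi\colon(\CC,X,\bo)\to(\CC,Y,\bo)$ is a homeomorphism of the embedded reduced curves, and Theorem \ref{theor-classic} immediately furnishes a one-to-one correspondence of branches preserving Puiseux pairs and pairwise intersection numbers. It remains to recover the multiplicities $m_i$ of the irreducible factors. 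For this I would exploit that $g\circ\Phi=f$ conjugates the whole family of fibers, $\Phi(f^{-1}(t))=g^{-1}(t)$, not merely the zero set. The multiplicity $m_i$ of the factor $f_i$ is read off the winding number of $f$ along a small meridian loop $\gamma$ of the branch $X_i$ at a smooth point isolated from the other branches, via $\frac{1}{2\pi i}\oint_{\gamma}\ud\log f=\sum_i m_i\,\mathrm{lk}(\gamma,X_i)$. Since $\Phi$ is a homeomorphism of pairs it carries a meridian of $X_i$ to a meridian of $Y_i$ (linking $Y_i$ once and the other branches zero times), and since $g\circ\Phi=f$ the winding number of $f$ along $\gamma$ equals that of $g$ along $\Phi(\gamma)$; hence $m_i$ equals the multiplicity of the corresponding factor of $g$.

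For sufficiency, assume the numerical data of $f$ and $g$ agree under a correspondence of factors. The point is that this data — the Puiseux pairs of each branch, the pairwise intersection numbers, and the multiplicities of the factors — is exactly the decoration of the minimal embedded (good) resolution of the divisor of $f$: the exceptional components and their multiplicities $N_k$ in the total transform $\pi_f^{*}f$ are determined by the Puiseux and intersection data through the recursion recorded in (\ref{eq:multiplicity}), while the multiplicities $m_i$ weight the strict transforms. Two such weighted dual graphs being isomorphic, I would first produce, as in Theorem \ref{theor-classic}, a homeomorphism of the reduced pairs, and then upgrade it to a genuine right-equivalence by working on the resolution. In local normal-crossings coordinates a total transform reads $\pi_f^{*}f=u^{a}v^{b}\cdot(\text{unit})$ with exponents $(a,b)$ prescribed by the weights; matching all the $N_k$ and $m_i$ lets one build a conjugating homeomorphism chart by chart, glue the charts along the exceptional divisor, and push the result down through $\pi_g$, yielding a germ $\Phi$ with $g\circ\Phi=f$ because $\pi_f$ and $\pi_g$ are isomorphisms off the exceptional sets.

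The genuinely delicate step, and the one separating this statement from the classical Theorem \ref{theor-classic}, is precisely this upgrade from an embedded homeomorphism of reduced curves to a conjugation of the functions. Knowing only $\Phi(X)=Y$ says nothing a priori about the values of $g\circ\Phi$ off the zero locus, and it is exactly the multiplicity data that must be invoked to control them: near a generic point of $X_i$ the function looks like $u\cdot\xi^{m_i}$ for a transverse coordinate $\xi$ and a unit $u$, so the conjugation must carry a tubular collar of $X_i$ on which $|f|$ decays at the rate dictated by $m_i$ onto the analogous collar for $g$, and these collars glue consistently only when the exceptional multiplicities agree along the entire resolution graph. The heart of the technical work lies at the points where several branches meet, where the local model is a product of powers of the branch equations and the intersection numbers govern how the collars interlock; this is the structural reason the intersection numbers cannot be dropped from the statement.
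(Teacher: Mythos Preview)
The paper does not prove Theorem~\ref{theor-parusinski}. It is stated in the preliminaries section as a known result of Parusi\'nski, cited from \cite{Pa}, and is used as a black box in deriving the final corollary of Theorem~\ref{thm:main}. There is therefore no proof in the paper against which to compare your proposal.

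That said, as an independent attempt your outline is reasonable in spirit but uneven in rigor. The necessity direction is essentially complete: passing to the reduced zero loci and invoking Theorem~\ref{theor-classic} handles the Puiseux and intersection data, and reading the factor multiplicities from the degree of $f$ restricted to a meridian of $X_i$ (transported by $\Phi$ to a meridian of the corresponding $Y_j$) is a clean topological argument. One small point to tidy is orientation: a homeomorphism of $(\CC,\bo)$ may reverse orientation, so the linking numbers match only up to a global sign, which is harmless here since the multiplicities are positive.

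The sufficiency direction, however, is only a strategy. Saying that the decorated resolution graphs agree and that one can ``build a conjugating homeomorphism chart by chart'' and ``glue along the exceptional divisor'' is precisely the content of Parusi\'nski's paper, and the work lies entirely in making that gluing coherent: in each normal-crossings chart one needs a local homeomorphism conjugating $u^a v^b$ to itself (or to the corresponding model for $g$), these local conjugations must agree on overlaps, and the resulting map on the resolution must descend through the blow-downs. None of these steps is automatic, and your sketch does not indicate how any of them would actually be executed. If you want a self-contained argument you will need to supply those constructions; otherwise, citing \cite{Pa} as the paper does is the honest route.
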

\subsection{Lipschitz geometry of complex plane curve singularities}
$ $

\smallskip
The Lipschitz geometry of complex plane curve singularities we are interested in
is the Lipschitz geometry which comes from being embedded in the plane.
It is described in a collection of three articles over 40 years, initiated with the seminal paper \cite{PT}, followed
then  by \cite{Fe} and concluding for now with the recent preprint \cite{NP}.
Those papers state that the Lipschitz geometry of complex plane curve singularities
determines and is determined by the embedded topology of such singularities. The version of this result which
we are going to use is the following one:
\begin{theorem}\label{theor-curves}
Let $X$ and $Y$ be germs of complex analytic plane curves at $\bo\in\CC$. Then, there exists a homeomorphism
$\Phi\colon(\CC,X,\bo)\rightarrow(\CC,Y,\bo)$
if, and only if, there exists a (subanalytic) bi-Lipschitz homeomorphism
$H\colon(\CC,X,\bo)\rightarrow(\CC,Y,\bo).$
\end{theorem}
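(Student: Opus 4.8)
The \emph{if} direction is immediate, since a bi-Lipschitz homeomorphism is in particular a homeomorphism; the entire content therefore lies in the \emph{only if} direction, and the plan is to promote a topological equivalence to a subanalytic bi-Lipschitz one. First I would apply Theorem \ref{theor-classic} to translate the topological hypothesis into purely numerical data: the existence of $\Phi\colon(\CC,X,\bo)\to(\CC,Y,\bo)$ is equivalent to the equality, up to a re-indexation of the branches, of the Puiseux pairs of corresponding branches $X_i,Y_i$ together with the intersection numbers $(X_i,X_j)_\bo=(Y_i,Y_j)_\bo$ of corresponding pairs. It then suffices to show that this discrete data determines the ambient subanalytic bi-Lipschitz type of the germ, that is, to build an $H$ realizing it.

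The construction I would carry out in two stages. For a single irreducible branch I would use the Puiseux parameterization \eqref{eq:Puiseux-param} to exhibit $X$ as the image of $t\mapsto(t^m,\Psi(t))$ and argue that the characteristic exponents alone (the Puiseux pairs) fix the ambient bi-Lipschitz type: the non-characteristic monomials and the holomorphic units $\vp_i$ contribute only higher-order corrections, which can be straightened by a subanalytic bi-Lipschitz ambient map, reducing $X$ to a monomial model determined by its Puiseux pairs. This yields, for each $i$, a subanalytic bi-Lipschitz germ carrying a neighborhood of $X_i$ onto a neighborhood of $Y_i$.

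For the multi-branch case I would encode the relative position of two branches metrically. The intersection number $(X_i,X_j)_\bo$ is detected by the rate at which the branches approach one another, namely by the exponent $q_{ij}$ with $\dist(X_i\cap S_\epsilon,\,X_j\cap S_\epsilon)\sim\epsilon^{q_{ij}}$ on small spheres $S_\epsilon$ about $\bo$; matching the intersection numbers, together with the Puiseux pairs, matches these contact exponents. I would then organize a neighborhood of $\bo$ into a finite subanalytic decomposition (a carousel-type partition) into zones each adapted to one branch or to a cluster of branches sharing high contact, define the map zone by zone from the single-branch maps above, and verify that the Lipschitz constants are uniformly controlled and that the pieces agree in a Lipschitz manner across their common boundaries, so that they assemble into a global subanalytic bi-Lipschitz homeomorphism $H\colon(\CC,X,\bo)\to(\CC,Y,\bo)$.

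The main obstacle I anticipate is precisely this gluing: ensuring that the maps constructed separately near distinct branches fit together without distorting the metric in the \emph{collision zones} where branches have high mutual contact, which is exactly where the intersection numbers enter essentially. Controlling the Lipschitz constants across the boundaries of the carousel decomposition is the delicate point, and it is the content worked out in \cite{PT,Fe,NP}; the cleanest route is to invoke their construction to supply the ambient map realizing the numerical data extracted from Theorem \ref{theor-classic}.
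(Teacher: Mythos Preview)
The paper does not supply its own proof of Theorem~\ref{theor-curves}: it is stated as a known result, attributed to \cite{PT,Fe,NP}, with the remark that the construction in \cite{NP} already yields a subanalytic bi-Lipschitz map. Your proposal is therefore not competing against an argument in the paper but against a citation, and in the end you make the same move: after sketching the two-stage construction (monomial model for a single branch, carousel gluing for several), you defer to \cite{PT,Fe,NP} for the actual work. In that sense your treatment is consistent with the paper's, only more expansive.

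What you add beyond the paper is an honest outline of the strategy behind those references, and the outline is broadly accurate: reduce via Theorem~\ref{theor-classic} to Puiseux pairs and intersection numbers, realize each branch by a bi-Lipschitz normal form, and glue using a carousel partition governed by the contact exponents. You correctly identify the gluing across high-contact zones as the genuine difficulty. Be aware, though, that the sentence ``the non-characteristic monomials and the holomorphic units $\vp_i$ contribute only higher-order corrections, which can be straightened by a subanalytic bi-Lipschitz ambient map'' hides exactly the content of \cite{PT} and \cite{NP}: this straightening is not a soft remark but the heart of the matter, since an arbitrary holomorphic unit is not removed by anything as simple as a coordinate change, and the ambient bi-Lipschitz map one builds is genuinely only Lipschitz, not smooth. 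So your sketch is a fair roadmap, but it is not a self-contained proof any more than the paper's citation is; both rely on the external construction.
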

The version stated above is almost Theorem 1.1 of \cite{NP}. The exact statement of Theorem 1.1 of \cite{NP} does not
require the subanalyticity of the homeomorphism $H$. However, we observe that the proof presented there
actually guarantees the subanalyticity of the mapping $H$.
%
%
%
%
%
%
%
%
%
%
%
%
%
%
%
%
%
%
%
%
%
%
%
%
%
%
%
%
%
%
%
%
\section{On the irreducible functions case}\label{section:irreducible}
This section is devoted to the relation between the order of a given irreducible plane complex 
function-germ $f$ along any real analytic half-branch germ at the origin $\bo$ of $\C^2$, and  
the contact (at the origin) between the half-branch and the zero locus $X$ of $f$. 
(Both notions of order and contact will be recalled below.) Theorem \ref{thm:order} is the main result of
the section and the key new ingredient to complete the subanalytic bi-Lipschitz contact classification. It 
states that the contact and the order satisfies an affine relation whose coefficients can be 
explicitly computed by means of the Puiseux data of $X$ presented in sub-Section \ref{subsection:puiseux}. 
 
\medskip
We suppose given some local coordinates $(w,y)$ centered at the origin of $\C^2$. 

\smallskip
Let $\Gamma$ be a real-analytic half branch germ at the origin of $\C^2$, that is the image 
of (the restriction of) a real analytic map-germ $\gamma: (\R_+,0) \to (\C^2,\bo)$ defined as 
$s\to \gamma(s) = (w(s),y(s))$. 
When $\Gamma$ is not  contained in the $y$-axis, 
we can assume that $\gamma (s) = (s^e \bu (s), s^{e'} \bv(s))$ 
for positive integers $e,e'$ with $\bu(z),\bv(z) \in \cO_1:=\C\{z\}$ and $\bu (0),\bv(0) \neq 0$.

\smallskip
When $\Gamma$ is not contained in the $y$-axis, we want to find a holomorphic change of coordinates $w \to x(w)$ so that 
\begin{equation}\label{eq:change}
x(z^e \bu (z)) = z^e \iff \bu(z)\cdot\bx (z^e \bu(z)) = 1 
\end{equation}
writing $x$ as $x(w) := w \cdot \bx (w)$ for a local holomorphic unit $\bx$.
Thus Equation (\ref{eq:change}) admits a holomorphic solution. The mapping $\Theta:(w,y) \to 
(x(w),y) = (x,y)$ is bi-holomorphic in a neighbourhood of the origin. 
In the new coordinates $(x,y)$, the mapping $\gamma$ now writes as $s \to (s^e, s^{e'} \bv (s))$.

\smallskip\noindent
{\bf Vocabulary}. A map-germ $\phi:(\R_+,0) \to (\C^2,\bo)$ is \em ramified analytic \em if there exists 
a function germ $\tilde{\phi} \in \cO_1$ and (co-prime) positive integers $p,q$ such that $\phi (t) = \tilde{\phi}(t^{p/q})$. 
We will further say that $\phi$ is \em a ramified analytic unit \em if $\tilde{\phi}$ is a holomorphic unit.

\smallskip
When $\Gamma$ is not contained in the $y$-axis, we re-parameterize $\gamma$ with $s (t) := t^{e/m}$
for $t \in \R_+$, so that $\gamma(t): = \gamma (s(t)) = (t^m,y(t))$ where $y$ is ramified analytic with 
$y(0)=0$ and $m$ is the multiplicity of the function $f$ at the origin.

\smallskip
If $\Gamma$ is contained in the $y$-axis then we take $s=t$ and $\Theta$ is just the identity mapping.

\medskip
We recall that the Puiseux pairs introduced in sub-Section \ref{subsection:puiseux} are bi-holomorphic 
invariant. 
We denote again $f = f(x,y)$ for $f\circ \Theta^{-1}$ and use the Puiseux decomposition for 
$f(x^m,y)$ given in Equation (\ref{eq:fact-irred}) to define for each 
$k = 0,\ldots,s$, the function germ $\Psi_k \in \cO_1$ as 
\begin{center}
\vspace{4pt}
\begin{tabular}{rcl}
\vspace{4pt}
$\Psi_0(x)$ & := & $0$, 
\\
\vspace{4pt}
$\Psi_k(x)$ & := & $x^{\beta_1}\vp_1(x^{e_1}) + \ldots + x^{\beta_k}\vp_k(x^{e_k})$ when $k\geq 1$.
\end{tabular}
\end{center}
Note that $\Psi_k (x) = \theta_k (x^{e_k})$ for some function germ $\theta_k \in \cO_1$.

\smallskip
For each $l=1,\ldots,m$, we can write 
\begin{center}
$y(t) = \Psi (\omega^l t) + t^{\lambda_l} u_l(t)$
\end{center}
where $\lambda_l \in \Q_{>0}\cup\{+\infty\}$ for $u_l$ is a ramified analytic unit, and with the 
convention that we write the null function $0$ as $0 = t^{+\infty} u_l (t)$. Thus the half-branch 
$\Gamma$ is contained in $X$ if and only if there exists $l$  such that $\lambda_l = +\infty$.

\medskip\noindent
{\bf Notation.} 
\em Let $\lambda : = \max_{l=1,\ldots,m}\lambda_l$.
\em

\smallskip
Let $l \in \{1,\ldots,m\}$ so that $\lambda = \lambda_l$. When $\Gamma$ is not contained in $X$ 
(equivalently $\lambda < +\infty$) and convening further that $\beta_0 =0$ and $\beta_{s+1} = + \infty$, 
there exists a unique integer $k\in \{0,\ldots,s\}$ such that 
\begin{center}
$\beta_k\leq \lambda <\beta_{k+1}$, 
\end{center}
and consequently we can write 
\begin{center}
$y(t) = \Psi_k (\omega^l t) + t^\lambda u(t)$
\end{center}
for $u$ a ramified analytic unit.
(Note that $\Psi = \Psi_k + R_k$ where $R_k(x) = (\Psi - \Psi_k)(x) = O (x^{\beta_{k+1}})$.) 

\smallskip
Evaluating the function $f$ along the parameterized arc $t \to \gamma (t)$ using Equation (\ref{eq:fact-irred}) gives
\begin{center}
\vspace{4pt}
$f(\gamma(t)) = f(t^m,y(t)) = f(t^m, \Psi_k(t) + t^\lambda u(t)) = 
U(t)\Pi_{i=1}^m[\Psi_k(\omega^l t) + t^\lambda u(t) - \Psi(\omega^i t)]$
\vspace{4pt} 
\end{center}
where $t\to U(t)$ is a ramified analytic unit.
Since the function $t \to f(\gamma (t))$ is a ramified analytic function, there exist a ramified analytic unit  
$V $ and a number $\nu\in \Q_{>0}\cup\{+\infty\}$ such that 
\begin{equation}\label{eq:order}
f(\gamma(t)) = t^\nu V(t). 
\end{equation}
The number $\nu$ of Equation (\ref{eq:order}) is called the \em order of the function $f$ along the  
parameterized curve $t\to \gamma (t)$. \em
\begin{lemma}\label{lem:order-arc}
1) Assume $\Gamma$ is contained in the $y$-axis. The order of the function $f$ along the parameterized curve 
$t\to\gamma(t) = (0,t^{e'}\bv(t))$ is $\nu = m \cdot e'$,

\smallskip\noindent
2) Assume $\Gamma$ is not contained in the $y$-axis. 
The order of $\nu$ the function $f$ along the parameterized curve $t\to\gamma(t)$ is given by 
$$
\nu = e_k \lambda + (e_0-e_1)\beta_1 + \ldots + (e_{k-1} - e_k) \beta_k \in \Q_{>0}\cup\{+\infty\}
$$
\end{lemma}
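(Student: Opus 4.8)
The plan is to evaluate $f$ along the arc by means of the factorization (\ref{eq:fact-irred}) and to read off the order $\nu$ as the sum of the orders of the individual factors. For part 1, when $\Gamma$ lies in the $y$-axis, the map $\Theta$ is the identity and I substitute $x=0$ into (\ref{eq:fact-irred}). Since $\Psi(0)=0$, every factor $y-\Psi(\omega^i\cdot 0)$ collapses to $y$, so that $f(0,y)=U(0,y)\,y^m$ with $U(0,0)\neq 0$. Plugging in $y=t^{e'}\bv(t)$ and using that $U(0,\cdot)$ and $\bv$ are units gives $f(\gamma(t))=t^{m e'}\cdot(\text{unit})$, whence $\nu=m e'$.

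For part 2, the evaluation already displayed in the text reads $f(\gamma(t))=U(t)\prod_{i=1}^m\bigl(y(t)-\Psi(\omega^i t)\bigr)$ with $U$ a ramified analytic unit. Since the order of a product is the sum of the orders and $U$ contributes $0$, it suffices to compute, for each $i$, the order of the factor $y(t)-\Psi(\omega^i t)$. By the very definition of the exponents $\lambda_i$ (namely $y(t)-\Psi(\omega^i t)=t^{\lambda_i}u_i(t)$ with $u_i$ a unit), this order is exactly $\lambda_i$, so that $\nu=\sum_{i=1}^m\lambda_i$. It remains to express each $\lambda_i$ in terms of $\lambda=\max_i\lambda_i$ and of the contacts between conjugate roots.

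To this end I would use the decomposition $y(t)-\Psi(\omega^i t)=\bigl[y(t)-\Psi(\omega^l t)\bigr]+\bigl[\Psi(\omega^l t)-\Psi(\omega^i t)\bigr]$, where $l$ is a fixed index realizing $\lambda=\lambda_l$. The first bracket has order $\lambda$, and the second has order $c_{li}:=\mathrm{ord}_t\bigl(\Psi(\omega^l t)-\Psi(\omega^i t)\bigr)$, with the convention $c_{ll}=+\infty$. Consequently $\lambda_i=\min(\lambda,c_{li})$ whenever $c_{li}\neq\lambda$. The one delicate point, which I expect to be the main obstacle, is the tie $c_{li}=\lambda$: a priori the two leading terms could cancel and raise the order strictly above $\lambda$. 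This is ruled out precisely by the maximality of $\lambda$, since $\lambda_i\le\lambda$ forbids such a cancellation and forces $\lambda_i=\lambda=\min(\lambda,c_{li})$ in this case as well.

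Finally I would invoke the classical description of the contacts among the conjugate roots of an irreducible germ: writing $\Psi(x)=\sum_j a_j x^j$ and using that $\omega$ is a primitive $m$-th root of unity, $c_{li}$ equals the least $j$ in the support of $\Psi$ not divisible by $m/\gcd(m,i-l)$; a short divisibility computation based on the relations $e_r=\gcd(e_{r-1},\beta_r)$ then shows that $c_{li}=\beta_r$ for exactly $e_{r-1}-e_r$ indices $i$, for each $r=1,\dots,s$. Substituting $\lambda_i=\min(\lambda,c_{li})$ into $\nu=\sum_i\lambda_i$, splitting the sum at the index $k$ characterised by $\beta_k\le\lambda<\beta_{k+1}$ (so that $\min(\lambda,\beta_r)=\beta_r$ for $r\le k$ and $\min(\lambda,\beta_r)=\lambda$ for $r>k$), and collapsing the telescoping sum $\sum_{r>k}(e_{r-1}-e_r)=e_k-e_s=e_k-1$ together with the contribution $i=l$, yields $\nu=e_k\lambda+\sum_{r=1}^k(e_{r-1}-e_r)\beta_r$, which is the asserted formula. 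The contact count is classical and the summation is pure bookkeeping, so the only genuinely subtle ingredient is the tie-breaking secured by the maximality of $\lambda$.
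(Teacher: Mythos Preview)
Your proof is correct and follows essentially the same route as the paper: evaluate $f$ along $\gamma$ via the factorization (\ref{eq:fact-irred}), compute the order of each factor $y(t)-\Psi(\omega^i t)$ by comparing with the conjugate roots, and count how many indices $i$ give each characteristic exponent $\beta_r$ using the divisibility of $l-i$ by $m_1\cdots m_r$. The paper phrases the per-factor computation directly from the explicit form $y(t)=\Psi_k(\omega^l t)+t^\lambda u(t)$, whereas you insert the intermediate decomposition $[y(t)-\Psi(\omega^l t)]+[\Psi(\omega^l t)-\Psi(\omega^i t)]$ and read off $\lambda_i=\min(\lambda,c_{li})$; this is the same computation, just organized differently.

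Two minor points where your write-up is actually a bit cleaner than the paper's. First, by starting from the intrinsic identity $\nu=\sum_i\lambda_i$ you get independence of $\nu$ from the choice of maximizing index $l$ for free; the paper checks this separately at the end. Second, your explicit treatment of the tie $c_{li}=\lambda$ (which can only occur when $\lambda=\beta_k$, since $c_{li}\in\{\beta_1,\dots,\beta_s,+\infty\}$) via the maximality of $\lambda$ is a point the paper's case analysis leaves implicit. Your invocation of the classical contact count among conjugate roots is exactly what the paper is using when it counts indices by divisibility of $l-i$ by $m_1\cdots m_j$, so nothing is being swept under the rug there.
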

\begin{proof}
If $\Gamma$ is contained in the $y$-axis, then the order of $f$ along $t\to (0,t^{e'}\bv (t))$ is $m\cdot e'$.

\medskip
We can assume that $\Gamma$ is parameterized as $\R_+ \ni t \to \gamma(t) = (t^m, \psi_k(t) + t^\lambda u(t))$.

\smallskip
For $i\in \{1,\ldots,m\}$ such that $l-i$ is not a multiple of $m_1$, the order of $\Psi_k(\omega^l t) + t^\lambda u(t) - 
\Psi(\omega^i t)$ is $\beta_1$. There are $m-1 -(e_1-1) = e_0 - e_1$ such indices $i$.

\smallskip
For any $0<j <k$, when $i \in \{1,\ldots,m-1\}$ is such that $l-i$ is a multiple of $m_1 \ldots m_j$ but not a multiple 
of $m_1 \ldots m_{j+1}$, the order of $\Psi_k(\omega^l t) + t^\lambda u(t) - \Psi(\omega^i t)$ is $\beta_j$. 
There are $e_j - e_{j+1}$ such indices. 

\smallskip
When $i \in \{1,\ldots,m\}$ is such that $l-i$ a multiple of $m_1 \ldots m_k$, the order of  
$\Psi_k(\omega^l t) + t^\lambda u(t) - \Psi(\omega^i t)$ is $\lambda$. There are $e_k$ such indices.

\smallskip
We just add-up all these orders to get the desired number $\nu$, once we have checked that this sum does not depend 
on the index $l$ such that $\lambda = \lambda_l$. Let $r\in\{1,\ldots,m\}$ be an index such that $\lambda_r = \lambda$.
Thus $y(t) = \Psi_k(w^r t) + t^\lambda u_r (t)$. 
If $l-r$ is not a multiple of $m_1 \cdots m_k$, then we check again that 
$0 = y(t) - y(t) = t^\lambda (u_l (t) - u_r (t)) + t^{\beta_j} W$ for a ramified analytic unit $W$ and 
$\beta_j \leq \beta_{k-1} < \lambda$, which is impossible.    
Necessarily $l-r$ is a multiple of $m_1\cdots m_k$ and thus $\Psi_k (w^r t) = \Psi_k (\omega^l t)$, so that 
$\nu$ is well defined.
\end{proof}

\medskip
Now we can introduce a sort of normalized parameterization of real analytic half-branch 
germs in order to do bi-Lipschitz geometry. More precisely, 
\begin{definition}
An \em analytic arc \em (at the origin of $\C^2$) is the germ at $0\in \R_+$ of a mapping $\alpha: [0,\epsilon[\rightarrow \CC$ 
defined as $t \to (x(t),y(t))$ such that:

0) the mapping $\alpha$ is not constant and $\alpha (0) = \bo$,

1) there exist a positive integer $e$ such that $t\to \alpha (t^e)$ is (the restriction of) a real analytic
mapping,

2) the arc is parameterized by the distance to the origin in the following sense: there exists positive constants
$a<b$ such that for $0\leq t \ll 1$ the following inequalities hold,
\begin{center}
$a t \leq |\alpha(t)| \leq bt.$
\end{center}
\end{definition}

\smallskip\noindent
We will denote any analytic arc by its defining mapping $\alpha$. 
Note that the semi-analyticity of the image of an analytic arc $\alpha$ implies a much better
asymptotic than that proposed in the definition, namely we know that
$|\alpha (t)| = \alpha_1 t + t \delta (t)$,
with $\alpha_1 >0$ and where $\delta$ is ramified analytic such that $\delta (0) = 0$.

\medskip
Let $\alpha$ be a real analytic arc. The function $t\to f\circ \alpha (t)$ is ramified analytic, thus 
as already seen in Equation (\ref{eq:order}) can be written as $f\circ \alpha (t)= t^{\nu_f (\alpha)} V(t)$ 
for a ramified analytic function and $\nu_f (\alpha) \in \Q_{>0}\cup\{+\infty\}$.
The \em order of the function $f$ along the real analytic arc $\alpha$ \em is the well defined rational number
$\nu_f (\alpha)$.

\smallskip
Let $C$ be a real-analytic half-branch germ at the origin of $\C^2$. 
Let $\alpha$ and $\beta$ be two real analytic arcs parameterizing $C$. 
We check with an easy computation that $\nu_f(\alpha) = \nu_f (\beta)$. 
Thus we introduce the following
\begin{definition}
The \em order of the function $f$ along the real analytic half-branch $C$ \em is the well defined number
$\nu_f(C) := \nu_f (\delta)$ for any arc $\delta$ parameterizing $C$.
\end{definition}

\smallskip
Let us denote $X(r)=\{\bp\in X \ : |\bp|=r\}$ for $r$ a positive real number.

\smallskip
Let $\alpha$ be any analytic arc. The \em contact \em{(}\em at the origin\em{) }\em between the analytic arc
$\alpha$ and the complex curve-germ $X$ \em is the rational number defined as
$$
c(\alpha,X)= 
\lim_{t\to 0+}\frac{\log(\dist(\alpha(t),X(|\alpha (t)|)))}{\log(t)}.
$$
Let $C$ be the image of the analytic arc $\alpha$ above. Given any other analytic arc $\beta$ 
parameterizing $C$, it is a matter of elementary computations to check that 
$c(\alpha,X) = c(\beta,X)$. Thus we present the following
\begin{definition}
The \em contact between the real-analytic half-branch $C$ and the curve $X$ \em is 
$c(C,X) := c(\delta,X)$ for any analytic arc $\delta$ parameterizing $C$.
\end{definition}
Let $\Gamma$ be a real analytic half-branch at the origin of $\C^2$. 
Let $\gamma$ be a parameterization of $\Gamma$ of the form $\R_+ \ni t \to (0,y(t))$ when $\Gamma$ 
is contained in the $y$-axis, where $y$ is a ramified analytic function-germ.
When $\Gamma$ is not contained in the $y$-axis, possibly after a holomorphic change of coordinates at 
the origin of $\C^2$, 
we consider a parameterization of $\Gamma$ of the form $\R_+\ni t\to (t^m,y(t))$ for $y$ ramified analytic.  

\smallskip
When the half-branch $\Gamma$ is not contained in $X$ (and regardless of its position relatively to
the $y$-axis), as already seen above, 
we can write $y(t)$ as $y(t) = \Psi_k (\omega^l t) + t^\lambda u(t)$ where $\beta_k \leq \lambda < \beta_{k+1}$ 
for some integer $k\in \{0,\ldots,s\}$, with $u$ a ramified analytic unit and $l \in \{1,\ldots,m\}$. 
Let $\mu$ be the order of $|\gamma(t)|$ at $t=0$, that is the positive rational number $\mu$ such
that $|\gamma(t)| = M t^\mu + o(t^\mu)$ for a positive constant $M$. 
Thus we find 
\begin{lemma}\label{lem:contact}
The contact between $\Gamma$ and $X$ is $c(\Gamma,X) = \frac{\lambda}{\mu}$.
\end{lemma}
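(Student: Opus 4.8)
The plan is to compute the order in $t$ of $\dist(\gamma(t),X(|\gamma(t)|))$ and then to pass from the parameter $t$ to a parameter by distance to the origin. For the latter, recall that $c(\Gamma,X)=c(\alpha,X)$ for any analytic arc $\alpha$ parameterizing $\Gamma$, say with $|\alpha(\tau)|=\alpha_1\tau+o(\tau)$ and $\alpha_1>0$. Matching radii along the common image $\alpha(\tau)=\gamma(t)$ gives $\alpha_1\tau+o(\tau)=|\gamma(t)|=Mt^\mu+o(t^\mu)$, hence $\tau=(M/\alpha_1)\,t^\mu(1+o(1))$ and $\log\tau=\mu\log t+O(1)$. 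Since $\dist(\gamma(t),X(|\gamma(t)|))$ is intrinsic to the point $\gamma(t)$ and is a subanalytic function of $t$, it admits a Puiseux expansion; once we show its order equals $\lambda$, so that $\log\dist(\gamma(t),X(|\gamma(t)|))=\lambda\log t+O(1)$, the definition of contact yields
$$
c(\Gamma,X)=\lim_{t\to 0^+}\frac{\lambda\log t+O(1)}{\mu\log t+O(1)}=\frac{\lambda}{\mu}.
$$
It therefore remains only to show that the distance has order $\lambda$ in $t$.

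For the upper bound, let $l$ realize $\lambda=\lambda_l$ and set $P(t):=(t^m,\Psi(\omega^l t))\in X$, which is legitimate since $(\omega^l t)^m=t^m$. From $\Psi=\Psi_k+R_k$ with $R_k(x)=O(x^{\beta_{k+1}})$ and $\lambda<\beta_{k+1}$ we get
$$
|\gamma(t)-P(t)|=|y(t)-\Psi(\omega^l t)|=|t^\lambda u(t)-R_k(\omega^l t)|=|u(0)|\,t^\lambda(1+o(1)).
$$
In the generic regime, where $|\gamma(t)|$ is governed by its first coordinate (this holds for $k\geq 1$, since then $\mathrm{ord}\,y(t)=\beta_1>m$, and for $k=0$ with $\lambda\geq m$), I would slide along the branch $x\mapsto(x^m,\Psi(x))$ starting from $x=\omega^l t$: as its radius equals $|x|^m(1+o(1))$, an adjustment of $|x|$ of order $O(t^{\lambda-m+1})$ matches the radius $|\gamma(t)|$ while displacing the point by $O(t^\lambda)$, producing $P'(t)\in X(|\gamma(t)|)$ with $|\gamma(t)-P'(t)|=O(t^\lambda)$. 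In the remaining regime $k=0$ and $\lambda<m$ (which contains the case $\Gamma\subset\{x=0\}$), the radius is governed by the second coordinate, $|\gamma(t)|\asymp t^\lambda$, and every point of $X$ on this sphere has $|x|\asymp t^{\lambda/m}$, whence $|x^m|\asymp t^\lambda\gg t^m$ and $|\Psi(x)|\asymp t^{\lambda\beta_1/m}=o(t^\lambda)$; such a point lies at distance $\asymp t^\lambda$ from $\gamma(t)$, which settles both bounds at once in this regime.

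The heart of the matter is the lower bound in the generic regime: no point of $X$ lies at distance $o(t^\lambda)$ from $\gamma(t)$ (a fortiori none on the sphere slice). Bounding the Euclidean distance below by the larger coordinate discrepancy, I would assume a competitor $Q=(x^m,\Psi(x))$ with $|x^m-t^m|=o(t^\lambda)$ and $|\Psi(x)-y(t)|=o(t^\lambda)$ and derive a contradiction. The first estimate, using $\lambda\geq m$, forces $x=\omega^i t(1+\eta)$ with $|\eta|=o(t^{\lambda-m})$ for some root index $i$; since $\beta_1>m$, the difference $\Psi(x)-\Psi(\omega^i t)$ then has order exceeding $\lambda$ and is negligible, so $|\Psi(x)-y(t)|$ has the same order as $|\Psi_k(\omega^l t)+t^\lambda u(t)-\Psi(\omega^i t)|$, which the proof of Lemma \ref{lem:order-arc} shows to be one of $\beta_1,\ldots,\beta_k,\lambda$, in every case at most $\lambda$; this contradicts $|\Psi(x)-y(t)|=o(t^\lambda)$. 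Combining the bounds gives $\dist(\gamma(t),X(|\gamma(t)|))\asymp t^\lambda$ in all cases, and the first paragraph then delivers $c(\Gamma,X)=\lambda/\mu$. The delicate point is exactly this interplay: because $\beta_1>m$, reducing the second-coordinate discrepancy by sliding along $X$ costs a first-coordinate discrepancy of order strictly below $\lambda$, which is what pins the distance at order $t^\lambda$.
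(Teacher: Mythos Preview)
Your argument is correct and follows essentially the same approach as the paper: both obtain the upper bound from the obvious point $(t^m,\Psi(\omega^l t))$ and the lower bound by showing that any competitor $(x^m,\Psi(x))$ close to $\gamma(t)$ must have $x=\omega^i t(1+o(1))$, after which $|\Psi(\omega^i t)-y(t)|$ has order at most $\lambda$ by the root-of-unity casework of Lemma~\ref{lem:order-arc}. The only organizational difference is that the paper first determines the order of $\dist(\gamma,X)$ (distance to the full curve) and then sandwiches the slice distance $\dist(\gamma,X(|\gamma|))$ between it and a radius-corrected point, whereas you work with the slice distance throughout and absorb the radius-correction into your upper-bound step.
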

\begin{proof}
The tangent cone at the origin of the curve $X$ is just the $x$-axis. 
Writing $\gamma(t) = (x(t),y(t))$, the half-branch is tangent to the $x$-axis if and only if 
$\lim_{t\to 0} x(t)^{-1}y(t) = 0$. When $\Gamma$ is transverse to the $x$-axis, we have $k = 0$ 
in the writing of $y(t)$ above, so that $\mu = \lambda$ and thus $c(\Gamma,X) = 1$. 

\medskip
Suppose that the half-branch $\Gamma$ is tangent to the $x$-axis, we deduce $\mu = m$
since the tangency hypothesis implies that $y(t) = o(t^m)$. Thus the mapping 
$t\to \gamma(t^\frac{1}{m}) = (t,y(t^\frac{1}{m})$ is an analytic arc parameterizing $\Gamma$. 
In particular we must have $\lambda > m$.

\medskip\noindent
{\bf Notation.} Up to the end of this proof will use the notation $Const$ to mean a 
positive constant we do not want to precise further.

\smallskip
Let $\rho:(\R_+,0) \to (\R_+,0)$ be the function defined as 
$\rho (t) := \dist (\gamma(t^\frac{1}{m}),X)$. First, since $\gamma$ is tangent to $X$ and the 
function $\rho$ is continuous and subanalytic, there exists a positive rational number $c$ such that 
\begin{equation}\label{eq:bound1}
\rho(t) = Const\cdot t^c + o(t^c). 
\end{equation}
Second, we obviously have for $t$ positive and small enough $\rho (t)\leq  t^\frac{\lambda}{m}|u(t)|$
so that we deduce from Equation (\ref{eq:bound1}) that $c \geq \frac{\lambda}{m}$.

\smallskip
Let $r(t) := |\gamma (t^\frac{1}{m})|$, so that we find $r(t) = t + o(t)$. Let $t\to\phi (t)$ 
be any analytic arc on $X$ such that $\rho (t) = |\phi(t) - \gamma (t^\frac{1}{m})|$. From 
Equation (\ref{eq:bound1}) we get
\begin{equation}\label{eq:bound2}
||\phi(t)| - r(t) | \leq Const \cdot t^c. 
\end{equation}
Writing $\phi = (x_\phi,y_\phi)$, we see from Equation (\ref{eq:bound2}) that $x_\phi (t) = t + O(t^c)$. 
Let $\xi:(\R_+,0) \to (\C,0)$ be the ramified analytic function of the form $t\to\xi(t) : = t^\frac{1}{m}[1 + O(t^{c-1})]$ 
and such that $\xi(t)$ is a $m$-th root of $x_\phi(t)$. 
Thus $y_\phi (t) = \Psi (\omega^i \xi (t))$ for some $i\in\{1,\ldots,m\}$ and we observe that 
$y_\phi(t) = \Psi (\omega^i t^\frac{1}{m}) + o(t^\frac{\lambda}{m})$. 
Since $y(t) = \Psi_k (\omega^l t^\frac{1}{m}) + t^\frac{\lambda}{m}u(t^\frac{1}{m})$, with $u$ a ramified 
analytic function, and $|y_\phi (t) - y(t^\frac{1}{m})| \leq Const \cdot t^c$, we deduce that 
$\Psi_k (\omega^i T) = \Psi_k (\omega^l T)$. But this implies that $c\leq  \frac{\lambda}{m}$, and thus
$c= \frac{\lambda}{m}$.

\smallskip
From Equation (\ref{eq:bound2}) we deduce that 
\begin{equation}\label{eq:bound3}
\rho (t) \leq \dist (\gamma(t^\frac{1}{m}),X(r(t)) \leq Const \cdot t^c.
\end{equation}
Combining Equation (\ref{eq:bound1}) and Equation (\ref{eq:bound3}) we get the result.\
\end{proof}

The next result will be key for Theorem \ref{thm:main}, the main result of this note, 
is indeed the new ingredient to the range of questions we are dealing with here. 
We recall that the Puiseux data notation convenes that $e_{-1} = \beta_0 =0$, $e_0 = m$ 
and $\beta_{s+1} = +\infty$.
\begin{theorem}\label{thm:order}
Let $\Gamma$ be a real analytic half-branch at the origin of $\C^2$ as above. 
The order of the function $f$ along $\Gamma$ is given by 
\begin{eqnarray}\label{1}
\nu_f (\Gamma)  & = & e_k \cdot c(\Gamma,X) + (e_0 -e_1)\frac{\beta_1}{m} + \ldots + (e_{k-1} -e_k) \frac{\beta_k}{m} 
\\
\label{2}
& = & e_k \left(c(\Gamma,X) - \frac{\beta_k}{m}\right) + \sum_{i=\min (k-1,0)}^{k-1} e_i\left(\frac{\beta_{i+1}}{m} -\frac{\beta_i}{m}\right),
\end{eqnarray}
where the integer number $k \in \{0,\ldots,s\}$ in Equations (\ref{1}) and (\ref{2}) is uniquely determined when $c(\Gamma,X) < + \infty$ 
by the following condition:
\begin{center}
$\beta_k \leq m\cdot c < \beta_{k+1}$.
\end{center}
\end{theorem}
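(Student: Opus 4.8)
The plan is to derive Theorem \ref{thm:order} by combining the two preceding lemmas through a single change of parameter. Lemma \ref{lem:order-arc} computes the order of $f$ along the specific parameterization $\gamma(t)=(t^m,y(t))$ adapted to the Puiseux exponent of $X$, whereas both $\nu_f(\Gamma)$ and $c(\Gamma,X)$ are intrinsic invariants of the half-branch, computed along an arc parameterized by the distance to the origin. So the first task is to make this discrepancy explicit. Working, without loss of generality, in the coordinates produced by the biholomorphism $\Theta$ of the Preliminaries (under which the order of $f$ along an arc and the contact with $X$ are unchanged, since $\Theta$ is bi-Lipschitz and preserves the Puiseux data), I reduce to $\gamma(t)=(t^m,y(t))$ with $y(t)=\Psi_k(\omega^l t)+t^\lambda u(t)$ and $\beta_k\le\lambda<\beta_{k+1}$, folding the $y$-axis situation into the analysis below.

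First I would relate the two notions of order. Write $f(\gamma(t))=t^\nu V(t)$ as in Equation (\ref{eq:order}), and let $\mu$ be the order of $|\gamma(t)|$, so $|\gamma(t)|=Mt^\mu+o(t^\mu)$ as in Lemma \ref{lem:contact}. Any distance-parameterized arc $\alpha$ tracing $\Gamma$ is obtained from $\gamma$ by a ramified reparameterization $t=h(\tau)$ with $h(\tau)=\tau^{1/\mu}(a+o(1))$, $a>0$, forced by $|\alpha(\tau)|\sim\tau$; substituting into $f\circ\gamma$ gives $f(\alpha(\tau))=\tau^{\nu/\mu}W(\tau)$ with $W(0)\neq0$, hence the intrinsic order is
\[
\nu_f(\Gamma)=\frac{\nu}{\mu}.
\]
Together with $c(\Gamma,X)=\lambda/\mu$ from Lemma \ref{lem:contact}, this turns the formula of Lemma \ref{lem:order-arc} into
\[
\nu_f(\Gamma)=\frac{\nu}{\mu}=e_k\frac{\lambda}{\mu}+\frac{1}{\mu}\sum_{i=1}^k(e_{i-1}-e_i)\beta_i
=e_k\,c(\Gamma,X)+\frac{1}{\mu}\sum_{i=1}^k(e_{i-1}-e_i)\beta_i .
\]

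The remaining, and most delicate, step is to replace $\mu$ by $m$ in the correction sum so as to recover Equation (\ref{1}). Since the tangent cone of $X$ is the $x$-axis and $\mu=\min(m,\operatorname{ord}_t y)$, I would argue by cases. When $k\ge 1$ one has $\operatorname{ord}_t y=\beta_1>m$, so $\mu=m$ and the substitution is immediate; moreover $mc=\lambda$, which shows that the index singled out by $\beta_k\le mc<\beta_{k+1}$ coincides with the one in Lemma \ref{lem:order-arc}. When $k=0$ (including the transverse case $\mu=\lambda<m$ and the $y$-axis case $\mu=e'$) the correction sum is empty, and the identity collapses to $\nu_f(\Gamma)=m\lambda/\mu=m\,c(\Gamma,X)=e_0\,c(\Gamma,X)$, which is exactly Equation (\ref{1}) for $k=0$; here $mc<\beta_1$ again confirms the stated determination of $k$. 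The main obstacle is thus precisely this bookkeeping of $\mu$: one must check that the transverse and $y$-axis situations, where $\mu\ne m$, do not spoil the affine formula, which works out only because the $\beta_i$-terms disappear exactly when $\mu$ fails to equal $m$. Finally, Equation (\ref{2}) follows from Equation (\ref{1}) by the elementary reindexing of the telescoping sum $\sum_{i=1}^k(e_{i-1}-e_i)\beta_i/m$, using the conventions $e_{-1}=\beta_0=0$ and $e_0=m$.
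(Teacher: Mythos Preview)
Your proposal is correct and follows essentially the same approach as the paper, which simply says the theorem is a rewriting of Lemma \ref{lem:order-arc} in terms of the arc size together with Lemma \ref{lem:contact}. You have in fact supplied the details the paper leaves implicit: the reparameterization identity $\nu_f(\Gamma)=\nu/\mu$, the case analysis showing that $\mu=m$ precisely when $k\ge1$ (so that the $\beta_i$-sum survives unchanged), and the observation that for $k=0$ the sum is empty so the value of $\mu$ is irrelevant.
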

\begin{proof}
It is just a rewriting of Lemma \ref{lem:order-arc} in term of the size $t$ of any arc parameterizing $\Gamma$ 
and uses Lemma \ref{lem:contact}.
\end{proof}

A direct consequence of the above result is the following result about
bi-Lipschitz contact equivalence.

\begin{proposition}\label{main-proposition}
Let $(\CC,X,\bo)$ and $(\CC,Y,\bo)$ be two germs of irreducible complex plane
curves defined by reduced function-germs $f$ and $g$ respectively. If there exists a subanalytic bi-Lipschitz
homeomorphism $H\colon (\CC,X,\bo)\rightarrow (\CC,Y,\bo)$ then there exist positive constants $0<A<B<+\infty$
such that in a neighbourhood of the origin we find
$$
A |f| \leq |g \circ H| \leq B |f|.
$$
\end{proposition}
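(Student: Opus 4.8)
The plan is to reduce the two-sided multiplicative estimate to an equality of orders along every real analytic arc, and then to read off that equality from Theorem \ref{thm:order}. Both $|f|$ and $|g\circ H|$ are non-negative, continuous and subanalytic germs at $\bo$ (for $|g\circ H|$ this uses that $H$ is subanalytic), and their zero-set germs coincide, both being $X$ (since $H(X)=Y=g^{-1}(0)$). For such germs the Curve Selection Lemma together with the Łojasiewicz inequality for subanalytic functions gives the standard dictionary: a bound $|g\circ H|\le B|f|$ holds near $\bo$ for some $B$ if and only if $\mathrm{ord}_t\big((g\circ H)\circ\alpha\big)\ge \mathrm{ord}_t\big(f\circ\alpha\big)$ for every analytic arc $\alpha$ parameterized by the distance to $\bo$; indeed, were no such $B$ to exist, the subanalytic ratio $|g\circ H|/|f|$ would be unbounded near $\bo$, and curve selection would produce an arc along which it blows up, forcing a strict inequality of orders. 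Applying this in both directions, it suffices to prove
$$
\mathrm{ord}_t\big(f\circ\alpha\big)=\mathrm{ord}_t\big((g\circ H)\circ\alpha\big)
$$
for every analytic arc $\alpha$ at $\bo$.

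Fix such an arc $\alpha$ and let $\Gamma$ be the half-branch it parameterizes. The left-hand side is exactly $\nu_f(\Gamma)$, which Theorem \ref{thm:order} expresses as an affine function of $c(\Gamma,X)$ with coefficients built from the Puiseux data of $X$. For the right-hand side I would set $\tilde\alpha:=H\circ\alpha$: since $H$ is bi-Lipschitz, $|\tilde\alpha(t)|\asymp|\alpha(t)|\asymp t$, so $\tilde\alpha$ is, after reparameterization by distance, an arc parameterizing the subanalytic half-branch $C:=H(\Gamma)$ (which lies in the target copy of $\CC$ containing $Y$), and $(g\circ H)\circ\alpha=g\circ\tilde\alpha$ has order $\nu_g(C)$. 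Since the computation underlying Theorem \ref{thm:order}, namely Lemma \ref{lem:order-arc}, only uses that the arc is ramified analytic with $|\cdot|\asymp t$, it applies to $C$ as well and writes $\nu_g(C)$ as the same type of affine function of $c(C,Y)$, now with coefficients from the Puiseux data of $Y$.

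Two inputs then close the argument. First, the contact is a subanalytic bi-Lipschitz invariant: because $H$ is $L$-bi-Lipschitz and $H(X)=Y$, one has $\dist(H(\bp),Y)\asymp\dist(\bp,X)$, and since $H$ preserves the order of the radius, $\dist(\tilde\alpha(t),Y(|\tilde\alpha(t)|))$ and $\dist(\alpha(t),X(|\alpha(t)|))$ have the same exponent in $t$; hence $c(C,Y)=c(\Gamma,X)$. Second, the affine functions produced by Theorem \ref{thm:order} for $(f,X)$ and $(g,Y)$ are literally the same function, because $X$ and $Y$ share all their Puiseux data: the given subanalytic bi-Lipschitz homeomorphism yields, via Theorem \ref{theor-curves}, a homeomorphism of pairs $(\CC,X,\bo)\to(\CC,Y,\bo)$, and Theorem \ref{theor-classic} then forces $X$ and $Y$ (each irreducible) to have equal Puiseux pairs, hence equal multiplicity sequence $e_0,\dots,e_s$ and equal exponents $\beta_1,\dots,\beta_s$; in particular the integer $k$ selected by $\beta_k\le m\,c<\beta_{k+1}$ is the same on both sides. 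Combining the two inputs gives $\nu_f(\Gamma)=\nu_g(C)$ along every arc, and the reduction above yields constants $0<A<B<+\infty$.

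The step I expect to be the main obstacle is the bi-Lipschitz invariance of the contact $c(\Gamma,X)=c(C,Y)$: one must pass carefully from the global distance $\dist(\cdot,X)$, where the bi-Lipschitz bound is transparent, to the \emph{sphere-slice} distance $\dist(\cdot,X(r))$ used in the definition of contact, controlling the radial correction, which is of order $t$ and therefore negligible against the contact exponent, exactly as in the proof of Lemma \ref{lem:contact}. A secondary point requiring care is that $\tilde\alpha=H\circ\alpha$ is a priori only subanalytic, so one must verify that it is ramified analytic after reparameterization by distance before invoking the formula of Theorem \ref{thm:order}.
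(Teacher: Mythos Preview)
Your proposal is correct and follows essentially the same route as the paper's proof: reduce via curve selection to an equality of orders along arcs, apply Theorem~\ref{thm:order} on both sides, and conclude from the bi-Lipschitz invariance of the contact together with the equality of the Puiseux data of $X$ and $Y$ (which the paper uses without comment, while you invoke Theorems~\ref{theor-curves} and~\ref{theor-classic} explicitly). The paper packages this as a contradiction argument but uses identical ingredients; it likewise glosses over the subanalytic-versus-analytic arc issue you flag at the end, which is handled by the fact that the one-dimensional subanalytic germ $H(\Gamma)$ admits an analytic arc parameterization.
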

\begin{proof}
If it is not true, it happens along a real-analytic half-branch $C$. Necessarily such a half-branch $C$ must be
tangent to the curve $X$. Taking a parameterization of $C$ by an arc $\alpha$, we can for instance assume that 
$(f\circ\alpha(t))^{-1} (g\circ H\circ \alpha (t))$ goes to $0$ as $t$ goes to $0$.
Let $\nu$ be the order of $f(\alpha(t))$ and $\nu'$ the order of $g(H(\alpha(t)))$. 
Theorem \ref{thm:order} provides  
\begin{eqnarray*}
\nu & = &  (e_0 -e_1)\frac{\beta_1}{m} + \ldots + (e_{k-1} -e_k) \frac{\beta_k}{m} + e_k \cdot c(C,X) \\
\nu' & = & (e_0 -e_1)\frac{\beta_1}{m} + \ldots + (e_{k'-1} -e_{k'}) \frac{\beta_{k'}}{m} + e_{k'} \cdot c(H^{-1}(C),Y).
\end{eqnarray*}
From the proofs of Lemma \ref{lem:order-arc} and Lemma \ref{lem:contact} we know that   
\begin{center}
$\beta_{k'} \leq m \cdot c(H^{-1}(C),Y) < \beta_{k'+1}$ and $\beta_k \leq  m\cdot c(C,X) < \beta_{k+1}$.
\end{center}
Since the contact is a bi-Lipschitz invariant we get $c(C,X) = c(H^{-1}(C),Y)$. Besides $\nu' > \nu$, thus 
we deduce $k' > k$. This latter inequality implies  
\begin{center}
$m \cdot c(H^{-1}(C),Y) \geq \beta_{k'} \geq \beta_{k+1} > m\cdot c(C,X)$,
\end{center}
which is impossible. 
\end{proof}
\section{Main Result}
Let $f\colon(\CC,\bo)\rightarrow(\C,0)$ be a germ of analytic function. Let $f=f_1^{\bm_1}\cdots f_r^{\bm_r}$ be the irreducible
decomposition of the function, where $f_1,\dots,f_r$ are irreducible function-germs and $\bm_1,\dots,\bm_r$, the corresponding
respective multiplicities, are positive integer numbers.

\medskip
Let $X_i$ be the zero locus of $f_i$, let $m_i$ be the multiplicity of $f_i$ at $\bo$ and let $(\beta_j^{(i)},e_j^{(i)})_{j=1}^{s_j}$
be its Puiseux pairs.
Let $\Gamma$ be a real analytic half-branch at the origin. Let $c_i := c(\gamma,X_i)$ be the contact of $\Gamma$ with $X_i$ 
and let $\nu_i = \nu_{f_i}(\Gamma)$ be the order of $f_i$ along $\Gamma$.

Since we have defined in Section \ref{section:irreducible} the order of an irreducible function-germ along $\Gamma$,
the order of $f$ along $\Gamma$ is defined as the sum of the order of each of its irreducible 
component weighted by the corresponding multiplicity (as a factor of the irreducible decomposition of $f$).   
From Theorem \ref{thm:main} we deduce straightforwardly the next
\begin{lemma}\label{lem:order-general}
The order $\nu$ of the function $f$ along $\Gamma$ is
\begin{eqnarray*}
\nu & := & \bm_1 \cdot \nu_1 + \ldots + \bm_r \cdot \nu_r \\
& = & \sum_{i=1}^r \bm_i 
\left[e_{k_i}^{(i)} \left( c_i - \frac{\beta_{k_i}^{(i)}}{m} \right) + 
\sum_{j = \min (k_i -1,0)}^{k_i-1} e_j^{(i)} \left(\frac{\beta_{j+1}^{(i)}}{m} -
\frac{\beta_j^{(i)}}{m}\right) \right]
\end{eqnarray*}
where each of the integer $k_i\in\{0,\ldots,s_i\}$ is uniquely determined when $c_1 \cdots c_r < +\infty$ by the condition
\begin{center}
$\beta_{k_i}^{(i)} \leq m_i \cdot c_i < \beta_{k_i +1}^{(i)}$.
\end{center}
\end{lemma}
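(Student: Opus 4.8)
The plan is to read the statement as a branch-by-branch corollary of Theorem~\ref{thm:order}. Recall that the order of $f$ along $\Gamma$ has been \emph{defined} as the multiplicity-weighted sum $\nu = \bm_1\nu_1 + \ldots + \bm_r\nu_r$, so the only task is to substitute into each $\nu_i = \nu_{f_i}(\Gamma)$ the explicit affine expression furnished by Theorem~\ref{thm:order}. As a preliminary consistency check (not strictly needed for the lemma, but reassuring), one may note that this definition agrees with the intrinsic order read off Equation~(\ref{eq:order}): from $f\circ\gamma(t) = \prod_{i=1}^r \bigl(f_i\circ\gamma(t)\bigr)^{\bm_i} = t^{\sum_i \bm_i \nu_i}\,\prod_{i=1}^r V_i(t)^{\bm_i}$, where each $V_i$ is a ramified analytic unit, the product $\prod_i V_i^{\bm_i}$ is again a ramified analytic unit, so the order of $f\circ\gamma$ is exactly $\sum_i \bm_i\nu_i$.

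I would then apply Theorem~\ref{thm:order} to each irreducible factor $f_i$ in turn. Each $f_i$ is irreducible of multiplicity $m_i$ at $\bo$, with Puiseux pairs $(\beta_j^{(i)}, e_j^{(i)})_{j=1}^{s_i}$ and zero locus $X_i$, and $c_i = c(\Gamma, X_i)$ is its contact with $\Gamma$; hence Theorem~\ref{thm:order} gives
\[
\nu_i = e_{k_i}^{(i)}\Bigl(c_i - \tfrac{\beta_{k_i}^{(i)}}{m_i}\Bigr) + \sum_{j=\min(k_i-1,0)}^{k_i-1} e_j^{(i)}\Bigl(\tfrac{\beta_{j+1}^{(i)}}{m_i} - \tfrac{\beta_j^{(i)}}{m_i}\Bigr),
\]
where $k_i \in \{0,\ldots,s_i\}$ is the unique integer with $\beta_{k_i}^{(i)} \leq m_i\,c_i < \beta_{k_i+1}^{(i)}$, valid whenever $c_i < +\infty$. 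Inserting these $r$ expressions into $\nu = \sum_i \bm_i\nu_i$ produces exactly the formula in the statement, and the uniqueness of each $k_i$ under the hypothesis $c_1\cdots c_r < +\infty$ is inherited directly from Theorem~\ref{thm:order}.

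I expect no genuine obstacle: the lemma is a purely formal consequence of the irreducible case, whose entire substance already lives in Theorem~\ref{thm:order} together with its two supporting lemmas, Lemma~\ref{lem:order-arc} and Lemma~\ref{lem:contact}. The only points requiring care are bookkeeping ones, namely matching the factor multiplicities $\bm_i$ to the correct summands and keeping the per-branch Puiseux data $(\beta_j^{(i)}, e_j^{(i)})$ and multiplicities $m_i$ properly indexed (in particular, the denominators in the $i$-th summand should be $m_i$ rather than a single global $m$). Finally, I would dispose of the excluded regime: if $\Gamma \subset X_i$ for some $i$, equivalently $c_i = +\infty$, then $\nu_i = +\infty$ by Lemma~\ref{lem:order-arc} and hence $\nu = +\infty$, which is precisely why the finite affine formula is asserted only under $c_1\cdots c_r < +\infty$.
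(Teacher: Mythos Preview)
Your proposal is correct and matches the paper's approach exactly: the paper gives no proof at all for this lemma, merely stating that it is deduced ``straightforwardly'' from the irreducible case (Theorem~\ref{thm:order}), which is precisely the substitution-and-sum argument you spell out. Your observation that the denominators should read $m_i$ rather than a global $m$ is a genuine correction to the paper's statement.
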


The main result of this note is the following:
\begin{theorem}\label{thm:main} Let $f$ and $g$ be two analytic function-germs $(\CC,\bo)\rightarrow(\C,0)$.
Let $f=f_1^{\bm_1}\cdots f_r^{\bm_r}$ and $g=g_1^{\bn_1}\cdots g_s^{\bn_s}$ be respectively the irreducible decompositions
of the functions $f$ and $g$.
Let $X_i$ be the zero locus of $f_i$ and $Y_j$ be the zero locus of $g_j$.

\smallskip
The functions $f$ and $g$ are  subanalytically bi-Lipschitz contact equivalent if, and only if, there exists a
bijection $\sigma:\{f_1,\ldots,f_r\} \to \{g_1,\ldots,g_{r(=s)}\}$ between the irreducible factors of $f$ and $g$ such that

\smallskip
1) The multiplicities of each corresponding factors are equal, that is $\bm_i = \bn_{\sigma(i)}$,

\smallskip
2) The Puiseux pairs of $f_i$ and $g_{\sigma (i)}$ are the same, and

\smallskip
3) for any pair $i,j$, the intersection numbers $(X_i,X_j)_\bo$ and $(Y_{\sigma (i)},Y_{\sigma (j)})_\bo$ are equal.

\medskip
In particular, $f$ and $g$ are
subanalytically bi-Lipschitz contact equivalent if, and only if, they are right topologically equivalent.
\end{theorem}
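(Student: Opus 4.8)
The plan is to reduce both implications of the equivalence to the single quantitative fact that, by Lemma \ref{lem:order-general} and Theorem \ref{thm:order}, the order $\nu_f(\Gamma)$ of a function along a real analytic half-branch $\Gamma$ is an explicit affine function of the componentwise contacts $c(\Gamma,X_i)$, and then to read off the final sentence directly from Parusi\'nski's Theorem \ref{theor-parusinski}.

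For the forward implication, suppose $f$ and $g$ are subanalytically bi-Lipschitz contact equivalent, witnessed by a subanalytic bi-Lipschitz homeomorphism $H$ and constants with $A|f|\le|g\circ H|\le B|f|$ near $\bo$. The two-sided bound forces $f(\bp)=0$ exactly when $g(H(\bp))=0$, so $H$ restricts to a subanalytic bi-Lipschitz homeomorphism of pairs $(\CC,X,\bo)\to(\CC,Y,\bo)$ of the reduced zero loci and induces a bijection $\sigma$ with $H(X_i)=Y_{\sigma(i)}$. Since the Puiseux pairs of a branch and the intersection number of a pair of branches are complete invariants of the embedded topological type (Theorem \ref{theor-classic}), they are preserved by $H$ branch by branch, which is exactly conditions 2) and 3) for $\sigma$.

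It remains to deduce the equality of multiplicities, and this I expect to be the \emph{main obstacle}. The two-sided bound makes $f\circ\alpha$ and $g\circ H\circ\alpha$ have the same order in $t$ for every analytic arc $\alpha$, so $\nu_f(\Gamma)=\nu_g(H(\Gamma))$ for every half-branch $\Gamma$ (note that $H(\Gamma)$ is again a subanalytic, hence ramified analytic, half-branch). As $H$ is bi-Lipschitz with $H(X_i)=Y_{\sigma(i)}$, contact is a bi-Lipschitz invariant preserved componentwise, $c(\Gamma,X_i)=c(H(\Gamma),Y_{\sigma(i)})$, so together with 2) Theorem \ref{thm:order} gives $\nu_{f_i}(\Gamma)=\nu_{g_{\sigma(i)}}(H(\Gamma))$ for each $i$. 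Substituting these into Lemma \ref{lem:order-general}, the identity $\nu_f(\Gamma)=\nu_g(H(\Gamma))$ collapses to $\sum_{i=1}^r(\bm_i-\bn_{\sigma(i)})\,\nu_{f_i}(\Gamma)=0$ for every half-branch $\Gamma$. To isolate a single coefficient I would, for a fixed index $i_0$, let $\Gamma$ run through half-branches whose contact $c(\Gamma,X_{i_0})$ tends to $+\infty$: by Theorem \ref{thm:order} the order $\nu_{f_{i_0}}(\Gamma)$ then grows without bound, whereas for $i\neq i_0$ the contact $c(\Gamma,X_i)$ stays bounded by the finite contact between the distinct branches $X_{i_0}$ and $X_i$, so each $\nu_{f_i}(\Gamma)$ stays bounded. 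The only unbounded term in the vanishing sum is then the $i_0$-th, forcing $\bm_{i_0}=\bn_{\sigma(i_0)}$, which is condition 1). The delicacy lies in producing the right families of half-branches and in controlling the contacts with the remaining components; this is precisely where the new affine relation of Theorem \ref{thm:order} is indispensable.

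For the converse, assume 1), 2), 3) for a bijection $\sigma$. By Theorem \ref{theor-classic} conditions 2) and 3) provide a homeomorphism of pairs carrying $X_i$ to $Y_{\sigma(i)}$, and by Theorem \ref{theor-curves} it may be taken to be a subanalytic bi-Lipschitz homeomorphism $H$ realizing the same correspondence $\sigma$. As above, contact invariance together with 2) gives $\nu_{f_i}(\Gamma)=\nu_{g_{\sigma(i)}}(H(\Gamma))$, and now 1) with Lemma \ref{lem:order-general} yields $\nu_f(\Gamma)=\nu_g(H(\Gamma))=\nu_{g\circ H}(\Gamma)$ for every half-branch. Were the inequality $A|f|\le|g\circ H|\le B|f|$ to fail for all choices of constants, it would fail along some real analytic half-branch $C$, along which $f$ and $g\circ H$ would have different orders, contradicting the equality just established; this is exactly the contradiction scheme of Proposition \ref{main-proposition}. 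Hence $f$ and $g$ are subanalytically bi-Lipschitz contact equivalent. Finally, the last assertion is immediate: Parusi\'nski's Theorem \ref{theor-parusinski} characterizes right topological equivalence of $f$ and $g$ by exactly the existence of a factor correspondence preserving multiplicities, Puiseux pairs and pairwise intersection numbers, that is, by conditions 1), 2), 3); combined with the equivalence just proved, subanalytic bi-Lipschitz contact equivalence and right topological equivalence coincide.
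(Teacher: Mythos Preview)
Your proposal is correct and follows essentially the same route as the paper. In the forward direction both arguments coincide: from $A|f|\le|g\circ H|\le B|f|$ one gets $H(X_i)=Y_{\sigma(i)}$, conditions 2) and 3) from Theorem~\ref{theor-classic}/\ref{theor-curves}, and then equality of multiplicities by letting the contact of a half-branch with a chosen component tend to $+\infty$ so that the corresponding term dominates in Lemma~\ref{lem:order-general}. The only visible difference is in the converse: the paper applies Proposition~\ref{main-proposition} to each pair $(f_i,g_{\sigma(i)})$ to obtain componentwise two-sided bounds $A_i|f_i|\le|g_{\sigma(i)}\circ H|\le B_i|f_i|$ and multiplies them (using $\bm_i=\bn_{\sigma(i)}$), whereas you derive $\nu_f(\Gamma)=\nu_{g\circ H}(\Gamma)$ for every half-branch and then invoke curve selection. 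Your argument is a mild repackaging of the proof of Proposition~\ref{main-proposition} at the level of the product rather than factor by factor; it is equally valid and, as you note yourself, is ``exactly the contradiction scheme of Proposition~\ref{main-proposition}''.
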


\begin{proof}
%
First assume that, $r=s$,

- the intersection numbers $(X_i,X_j)_\bo$ and $(Y_i,Y_j)_\bo$ are equal for any $i\neq j$ and,

- the Puiseux pairs of the functions $f_i$ and $g_1$ are equal and,

- the multiplicities $\bm_i$ and $\bn_i$ are equal, for $i=1,\dots r$.

\smallskip
From Theorem \ref{theor-curves} we deduce there exists $H\colon(\CC,\bo)\rightarrow(\CC,\bo)$ 
a subanalytic bi-Lipschitz homeomorphism such that $H(X_i)=Y_i$ for any $i=1,\dots r$.
For each $i=1,\ldots,r$, Proposition \ref{main-proposition} implies there exist positive constants $0<A_i<B_i<+\infty$
such that in a neighbourhood of the origin we find
$$
A_i |f_i| \leq |g_i \circ H| \leq B_i |f_i|.
$$
Thus the functions $f$ and $g$ are bi-Lipschitz contact equivalent (via $h$).

\smallskip
The general situation, by hypothesis, is easily deduced from the special one above,
since it consists only in changing the indexation of one of the family of irreducible
factors (of the corresponding zero loci and the corresponding multiplicity).

\medskip
Conversely, we assume now that there exists $H\colon(\CC,\bo)\rightarrow(\CC,\bo)$  
a subanalytic bi-Lipschitz homeomorphism such that there exist positive constants $A<B$ such that in a
neighbourhood of the origin the following inequalities hold true:
\begin{equation}\label{eq-same-size}
A|f| \leq |g\circ H| \leq  B |f|.
\end{equation}
We immediately find $H(X)=Y$ and $r=s$. Up to re-indexation of the branches $Y_i$,
we also have $H(X_i)=Y_i$ for $i=1,\dots,r$.
Using Theorem \ref{theor-curves} again we deduce that the intersection numbers $(X_i,X_j)_\bo$ and $(Y_i,Y_j)_\bo$
are equal for any $i\neq j$ (let us denote each such number by $I_{i,j}$), the Puiseux pairs of the function-germs $f_i$ and
$g_i$ are equal.
It remains to prove that the multiplicities $\bm_i$ and $\bn_i$ are also equal, for $i=1,\dots,r$.
In order to prove that $\bm_1=\bn_1$, let $C$ be any real-analytic half-branch such that
the contact $c=c(C,X_1)$ is sufficiently large (and finite).  
More explicitly it means that the others contacts $c(C,X_i)$, for $i=2,\dots,r$, are equal to 
the intersection number $I_{i,1}:=(X_i,X_1)_\bo$. 
Since $H$ is a subanalytic bi-Lipschitz homeomorphism such that $H(X_i)=Y_i$
for any $i=1,\dots,r$, the pre-image $H^{-1}(C)$ is still a real analytic half-branch.
Since bi-Lipschitz homeomorphisms preserve the contact, we deduce that $c=c(H^{-1} (C),Y_1)$ and each contact
$c(H^{-1} (C),Y_i)$ is equal to the contact $(Y_i,Y_1)_\bo$, for $i=2,\dots,r$.
In other words we see
\begin{equation}\label{eq-order-g}
\nu_g (H^{-1} (C)) = c \cdot \bn_1 + I_{2,1}\cdot \bn_2+ \ldots +I_{r,1}\cdot \bn_r
\end{equation}
and
\begin{equation}\label{eq-order-f}
\nu_f (C) = c \cdot \bm_1 + I_{2,1}\cdot \bm_2+ \ldots +I_{r,1}\cdot \bm_r.
\end{equation}
Combining Equation (\ref{eq-same-size}) from the hypothesis, with Equations (\ref{eq-order-g}) and
(\ref{eq-order-f}) we conclude that
$$c\bn_1+I_{2,1}\bn_2+\ldots+I_{r,1}\bn_r=c\bm_1+I_{2,1}\bm_2+\cdots+I_{r,1}\bm_r.$$
Since the half-branch $C$ can be chosen asymptotically arbitrarily close to $X_1$, its contact $c$ goes $+\infty$, and thus
we find $\bm_1=\bn_1$. The same procedure can be applied for each remaining $i=2,\ldots,r$, substituting $i$ for $1$,
thus we conclude that that
\begin{center}
$\bm_i=\bn_i \;$ for $i=1,\dots,r$,
\end{center}
thus proving what we wanted.
\end{proof}
The first immediate consequence of our main result is the following:
\begin{cor}
Let $f$ and $g$ be two analytic function-germs $(\CC,\bo)\rightarrow(\C,0)$.
They are  bi-Lipschitz contact equivalent if, and only if, they are subanalytically bi-Lipschitz contact equivalent.
\end{cor}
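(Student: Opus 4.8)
The plan is to prove the two implications separately, with all of the content concentrated in one of them. The implication that subanalytic bi-Lipschitz contact equivalence implies bi-Lipschitz contact equivalence is immediate, since a subanalytic bi-Lipschitz homeomorphism realizing the former is in particular a bi-Lipschitz homeomorphism realizing the latter. So I would concentrate on the converse: assuming that $f$ and $g$ are bi-Lipschitz contact equivalent through a homeomorphism $H$ that is \emph{not} assumed subanalytic, I want to recover the three pieces of numerical data of Theorem \ref{thm:main} and then invoke that theorem to produce a \emph{subanalytic} bi-Lipschitz contact equivalence.

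To that end I would re-examine the converse half of the proof of Theorem \ref{thm:main} and isolate exactly where subanalyticity of $H$ was used. The first conclusions — $H(X)=Y$, $r=s$, and, after re-indexing, $H(X_i)=Y_i$ — only use that $H$ is a homeomorphism, and the equalities of the Puiseux pairs and of the intersection numbers $(X_i,X_j)_\bo=(Y_i,Y_j)_\bo$ follow from Theorem \ref{theor-classic} as soon as $H$ is a homeomorphism of pairs; none of this needs subanalyticity. The only place where subanalyticity intervened is in the equality of the multiplicities $\bm_i=\bn_i$, where one used that the image of a real-analytic half-branch under $H$ (resp.\ $H^{-1}$) is again a real-analytic half-branch, so that the order formula of Lemma \ref{lem:order-general} could be applied to it.

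I would remove this single use as follows. Fix an index, say $i=1$, and choose a real-analytic half-branch $C$ whose contact $c:=c(C,X_1)$ is large while $c(C,X_j)=(X_j,X_1)_\bo$ for $j\ge 2$, exactly as in the proof of Theorem \ref{thm:main}. Parameterizing $C$ by an analytic arc $\alpha$, the source-side statement $\nu_f(C)=\bm_1\,c+O(1)$ is the genuine analytic identity \eqref{eq-order-f}. I then transport everything to the target through the Lipschitz arc $H\circ\alpha$: since contact is a bi-Lipschitz invariant — a purely metric statement requiring no subanalyticity — and $H(X_j)=Y_j$, the limits $c(H\circ\alpha,Y_j)$ exist and equal $c(C,X_j)$, so $c(H\circ\alpha,Y_1)=c$ is large and $c(H\circ\alpha,Y_j)=(Y_j,Y_1)_\bo$ stays bounded for $j\ge 2$. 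The size inequality \eqref{eq-same-size} forces $|g\circ H\circ\alpha(t)|\asymp|f\circ\alpha(t)|$, hence the order of $g$ along $H\circ\alpha$ equals $\nu_f(C)$. What remains is to identify this order, for the possibly non-analytic arc $H\circ\alpha$, with $\bn_1\,c+O(1)$.

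I expect the real work to sit in this last identification, which is the substitute for the missing analyticity of the image. The key point is that only the \emph{leading} coefficient in $c$ is needed, and that once $c$ exceeds the contact of distinct Puiseux sheets of $Y_1$ with one another, the arc $H\circ\alpha$ is closer to $Y_1$ than those sheets are, hence asymptotic to a single sheet. The factorization \eqref{eq:fact-irred} of each $g_j$ then lets one run the bookkeeping of Lemma \ref{lem:order-arc}: one factor has order governed by the contact, the remaining factors have orders fixed by the separation of the sheets, i.e.\ by the Puiseux exponents. That computation uses only that the remainder term along the arc has a well-defined order — guaranteed here because the contacts are genuine limits — and never the analyticity of the arc itself; in the large-contact regime it yields $\nu_{g_j}(H\circ\alpha)=e^{(j)}_{s_j}\,c(H\circ\alpha,Y_j)+O(1)=c(H\circ\alpha,Y_j)+O(1)$ since $e^{(j)}_{s_j}=1$, whence $\nu_g(H\circ\alpha)=\sum_j\bn_j\,\nu_{g_j}(H\circ\alpha)=\bn_1\,c+O(1)$. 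Comparing with $\nu_f(C)=\bm_1\,c+O(1)$ and letting $c\to+\infty$ gives $\bm_1=\bn_1$, and the same argument applied to each index yields $\bm_i=\bn_i$ for all $i$. With the three numerical invariants matched, Theorem \ref{thm:main} produces a subanalytic bi-Lipschitz contact equivalence between $f$ and $g$, completing the nontrivial implication. The main obstacle is precisely this non-analytic order--contact estimate; everything else is a transcription of the argument already carried out for the subanalytic case.
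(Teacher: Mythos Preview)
The paper gives no proof of this corollary beyond declaring it ``the first immediate consequence of our main result.'' Your plan is precisely what that word \emph{immediate} has to mean: rerun the converse half of the proof of Theorem~\ref{thm:main} with a bi-Lipschitz $H$ not assumed subanalytic, recover Puiseux pairs and intersection numbers from Theorem~\ref{theor-classic} (which needs only a homeomorphism of pairs), and then establish $\bm_i=\bn_i$ by the same large-contact comparison. So at the level of strategy you and the paper agree.

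Where you go further than the paper is in isolating the one genuine point the ``immediate'' glosses over: in the paper's argument the image arc $H(C)$ is real-analytic because $H$ is subanalytic, and Theorem~\ref{thm:order} is applied to it; without subanalyticity that step is not available. Your proposed fix---show that the order--contact relation of Theorem~\ref{thm:order} persists for a Lipschitz arc once the contacts $c(H\circ\alpha,Y_j)$ exist as limits, which they do by bi-Lipschitz invariance---is the right idea, and your sheet-by-sheet reasoning (in the large-contact regime the arc is trapped near a single Puiseux root, so the factorization \eqref{eq:fact-irred} still controls each factor) is sound. For the branches $j\ge 2$ you only need that $\nu_{g_j}(H\circ\alpha)$ stays bounded as $c\to\infty$, which follows from the bounded contact and a crude \L ojasiewicz-type estimate; you do not need the exact formula there. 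In short, your proposal is the paper's intended argument made honest, with the one nontrivial verification correctly flagged and plausibly sketched.
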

The second consequence is:
\begin{cor}
The subanalytic bi-Lipschitz contact equivalence classification of complex analytic plane function-germs
has countably many equivalence classes.
\end{cor}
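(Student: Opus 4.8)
The plan is to deduce everything from Theorem \ref{thm:main}, which reduces the classification to a finite list of integer invariants; the only content of the corollary is then that such lists form a countable set. Accordingly, I would first attach to each germ $f\colon(\CC,\bo)\to(\C,0)$, with irreducible decomposition $f=f_1^{\bm_1}\cdots f_r^{\bm_r}$ and with $X_i$ the zero locus of $f_i$, the following packet of data: the number $r$ of irreducible factors; for each $i$ the multiplicity $\bm_i$ together with the finite list of Puiseux pairs $(\beta_1^{(i)},e_1^{(i)}),\ldots,(\beta_{s_i}^{(i)},e_{s_i}^{(i)})$ of $X_i$; and the intersection numbers $(X_i,X_j)_\bo$ for $1\le i<j\le r$.

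The key point I would record next is that this packet is a finite string of positive integers. Indeed $\C\{x,y\}$ is a unique factorization domain, so $r$ is finite; each branch has by definition only finitely many Puiseux pairs; and there are exactly $\binom{r}{2}$ intersection numbers, each a positive integer. Hence the assignment $f\mapsto(\text{packet})$ takes values in $\mathcal{D}:=\bigcup_{n\ge 1}\N^{n}$, the set of finite sequences of natural numbers, which is a countable union of countable sets and so is countable. Because Theorem \ref{thm:main} only matches factors up to the bijection $\sigma$, the genuine invariant is the packet taken modulo permutation of the factors, i.e. a point of the quotient $\mathcal{D}/\!\sim$, which is still countable.

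Finally I would invoke Theorem \ref{thm:main} in both directions: it states precisely that two germs are subanalytically bi-Lipschitz contact equivalent if and only if their packets coincide after a permutation of factors. This makes the induced map from the set of equivalence classes to $\mathcal{D}/\!\sim$ both well defined and injective, so there are at most countably many classes; they are in fact countably infinite, since the germs $x^{k}$, $k\in\N$, have pairwise distinct multiplicities and are therefore pairwise inequivalent. I do not anticipate a genuine obstacle here, as the mathematical substance is already contained in Theorem \ref{thm:main}; the only point deserving care is to verify that every entry of the packet is discrete and that only finitely many entries occur, after which countability is automatic.
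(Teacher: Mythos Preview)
Your proposal is correct and matches the paper's approach exactly: the paper gives no explicit proof of this corollary, treating it as an immediate consequence of Theorem~\ref{thm:main}, and your argument---that the classifying data are finite tuples of integers (Puiseux pairs, factor multiplicities, intersection numbers), hence range over a countable set---is precisely the intended reasoning. Your added remark that the classes are in fact infinitely many (via the germs $x^k$) is a welcome detail that the paper omits.
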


\end{document}